\documentclass[12pt]{amsart}
\usepackage{amssymb,amsthm,amsmath,listings,multicol,mathtools}
\usepackage[headings]{fullpage}
\usepackage{amsfonts,graphicx,transparent,enumitem}
\usepackage{mdframed,paracol}
\usepackage[dvipsnames]{xcolor}
\usepackage{color,bm}
\DeclareMathOperator{\lcm}{lcm}
\usepackage{multicol}
\newcommand{\twocols}{\begin{multicols}{2}}
\newcommand{\onecol}{\end{multicols}}
\newcommand{\threecols}{\begin{multicols}{3}}
\newcommand{\fourcols}{\begin{multicols}{4}}

\usepackage[all]{xy}
\usepackage[normalem]{ulem}
\usepackage[
  colorlinks=true,
  linkcolor=blue,
  citecolor=blue,
  urlcolor=blue
]{hyperref}

\usepackage{framed}

\newtheorem{Theorem}{Theorem}

\newtheorem{Algorithm}[Theorem]{Algorithm} 
\newtheorem{Lemma}[Theorem]{Lemma} \newtheorem{Proposition}[Theorem]{Proposition} \theoremstyle{definition}
  \newtheorem{Example}[Theorem]{Example}  \newtheorem{Remark}[Theorem]{Remark}  \newtheorem{Question}[Theorem]{Question}  
   
   \numberwithin{equation}{section}   
 \newtheorem*{Conjecture*}{Main Conjectures}

\def\<{\langle}
\def\>{\rangle}
\title{Betti Tables and the Licci Property}
\author{Adam Boocher}
\address{Department of Mathematics, University of San Diego}
\email{aboocher@sandiego.edu}

\begin{document}
\begin{abstract}
We show that the licci property of an ideal cannot, in general, be determined from its Betti table by exhibiting two $\mathfrak{m}$-primary monomial ideals containing the cubes the variables that have identical Betti tables, yet only one is licci.  In contrast, we show that for monomial ideals containing the square of each variable, the licci property is completely determined by the Betti table; indeed, it is determined by its first two columns. 
\end{abstract}

\maketitle
\vskip -.3in 
\
\section{Introduction}
\noindent 

Let $R$ be a $g$-dimensional polynomial ring over a field $k$, and let $\mathfrak m$ be the ideal generated by the variables. We say that an ideal $I$ is licci if it is in the linkage class of a complete intersection, e.g. see \cite{HU,HUMonomial,peskine,rao}.  The following question has been asked by Huneke, Polini, and Ulrich. 

\begin{Question}\label{Q1}
When can the licci property of an ideal $I$ be determined from the Betti table of $R/I$?
\end{Question}

The first result of this paper is an example showing that in general the Betti table cannot determine the licci property.  

\begin{Example}\label{counterexample}
Consider the two ideals of $k[a,b,c]$, 
$$I = \left(a^{3},b^{3},c^{3},bc^{2},a^{2}bc,a^{2}b^{2}\right),\ J = \left(a^{3},b^{3},c^{3},bc^{2},a^{2}c^{2},a^{2}b^{2}\right).$$  The quotient rings $R/I$ and $R/J$ each have the same Betti table: 
$$\begin{array}{r|ccccc}
      & 0 & 1 & 2 & 3\\ \hline 
     0: & 1 & . & . & .\\
     1: & . & . & . & .\\
     2: & . & 4 & 1 & .\\
     3: & . & 2 & 6 & 1\\
     4: & . & . & 1 & 2
     \end{array}.$$	
 The ideal $I$ is licci whereas $J$ is not. This can be seen, for instance by using the algorithm from \cite{HUMonomial} described below. These examples were discovered via an extensive computer search.
\end{Example}
To the best of our knowledge, this is the first such example in the literature, although there are many ``near'' examples, e.g. in \cite{HMNU}.  Note that both ideals in this example are $\mathfrak{m}$-primary as they contain the cube of each variable.  
     There is an algorithm in \cite{HUMonomial} which determines whether a given $\mathfrak{m}$-primary monomial ideal is licci. In that paper the authors showed that any ideal generated by at most 5 monomials must be licci, thus with regard to Question 1, our example has the smallest possible number of generators. There is another sense in which this example is ``smallest'', for if $I$ is monomial and contains the square of each variable, then the answer to Question 1 is yes, which is the main result of this paper. 
     
     \begin{Theorem}
     If $I$ is a monomial ideal that contains the square of each variable, then the licci property of $I$ can be detected from the Betti table of $R/I$. 	
     \end{Theorem}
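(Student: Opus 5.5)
Write $I=(x_1^2,\dots,x_g^2)+J$, where $J$ is generated by squarefree monomials in the remaining variables. Variables $x_i$ with $x_i\in I$ split off as a complete intersection factor. Linkage with respect to them does not change the licci property. Their contribution to the Betti table (a Koszul tensor factor) can be detected and removed. So I reduce to the case where every minimal generator other than the squares $x_i^2$ is a squarefree monomial of degree at least $2$. Equivalently, $I=(x_1^2,\dots,x_g^2)+I_\Delta$ for a simplicial complex $\Delta$ on $g$ vertices, and $R/I$ is the Artinian reduction of the Stanley--Reisner ring in the sense of "squares plus squarefree".

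The key tool is the Huneke--Ulrich algorithm for $\mathfrak m$-primary monomial ideals from \cite{HUMonomial}. Take the regular sequence of pure powers $x_i^{a_i}$ contained in $I$, here all $a_i=2$, and link. If $I$ is licci, the linked ideal $(x^2):I$ has strictly smaller degree, unless a degree condition fails. Huneke and Ulrich show that $I$ is licci iff repeatedly linking by the pure powers (minimally chosen) reaches a complete intersection. The obstruction is the criterion that a licci $\mathfrak m$-primary ideal not a complete intersection must have $\operatorname{indeg}$-type inequality: the initial degree satisfies $\operatorname{reg}(R/I)\ge(\operatorname{indeg} I-1)\cdot\operatorname{ht}$, roughly. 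I would compute the link explicitly. For $I=(x^2)+I_\Delta$, the link $(x_1^2,\dots,x_g^2):I$ is generated by the squares together with monomials $x_F x_{G}^{\,}$-type terms. These are indexed by the faces of $\Delta$, namely $\prod_{i\in F}x_i\cdot\prod_{j\notin F}x_j$ truncated, which is again a squares-plus-squarefree ideal of a related complex. So each step of the algorithm keeps the class. From this I would extract a purely combinatorial characterization: $I$ is licci iff $\Delta$ is one of a short explicit list. I expect the list to be $\Delta$ equal to a simplex minus a face structure making $I_\Delta$ either generated by $x_ix_j$ for a single pair/one monomial, or all of $I$ being an almost complete intersection–like configuration, e.g.\ $I_\Delta$ principal, or $\Delta$ a simplex boundary joined with a point. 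In practice, the linking degree argument forces the number of non-square generators of degree $2$ to be tiny.

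Next I show each datum in the characterization is read off from the first two columns of the Betti table. The first column gives the degrees of the minimal generators: $g$ (from the count in degree $2$ after subtracting) and the $f$-vector-like counts of minimal nonfaces by size. The second column in degree $3$ separates the squares' contribution, e.g.\ the syzygies $x_j\cdot x_i^2$ versus $x_i\cdot(x_ix_j)$ that count edges of minimal nonfaces of size $2$. Together these determine whether $\Delta$ lies in the licci list. The decisive inequality from Huneke--Ulrich should be expressible as a comparison between the number of quadric generators and linear syzygies among them, which is visible in columns $1$ and $2$.

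The main obstacle is the middle step: getting a clean combinatorial classification of which $\Delta$ give licci ideals. Tracking how the link transforms $\Delta$, likely via Alexander duality-type complementation, is the delicate part. I would first try small $g$ by computer to guess the list, then prove closure under the link operation and termination by induction on $g$.
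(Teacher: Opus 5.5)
There is a genuine gap, in two places. First, the classification step, which you yourself flag as the main obstacle, is never carried out, and the shape you predict is wrong. Licci ideals in this class can have arbitrarily many squarefree quadrics. For instance, in $k[a,y_1,\dots,y_n]$ the ideal $Q+a(y_1,\dots,y_n)$ is licci for every $n$: linking by $Q$ gives $(a,y_1^2,\dots,y_n^2,y_1\cdots y_n)$, and the next minimal link is $(a,y_1,\dots,y_n)$. The inequality $\operatorname{reg}(R/I)\ge(\operatorname{indeg} I-1)\operatorname{ht} I$ you quote is also wrong: $Q+(ab)\subset k[a,b]$ is $\mathfrak m^2$, which is licci with regularity $1<2$. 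Your Alexander-duality idea is actually the right way in. $Q:(Q+I_\Delta)$ is $Q$ plus the Stanley--Reisner ideal of the Alexander dual of $\Delta$, and it contains a variable $x_i$ exactly when $x_i$ divides every squarefree generator. Pushing this through the Huneke--Ulrich procedure gives the recursion ``if the gcd $h$ of the squarefree generators is $1$, not licci; otherwise divide by $h$, discard quotients that are single variables, and repeat.'' Unwinding it shows that $I$ is licci iff its degree-$j$ squarefree generators are $a_1\cdots a_{j-1}y_{j1},\dots,a_1\cdots a_{j-1}y_{jb_j}$ with all letters distinct.

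Second, and more seriously, the detection step is asserted rather than proved. Being of this nested shape is a condition on how variables are shared among generators, not a numerical invariant, so there is no ``datum'' to read off column by column. You need two things. (a) For nested ideals, $\beta_{2,j+1}$ is an explicit function of $g,b_2,\dots,b_j$. This requires deciding exactly which Taylor pairs are minimal in each degree, and checking that the non-minimal ones lie in the span of the minimal ones, so that counting minimal Taylor syzygies computes $\beta_2$. (b) Conversely, every ideal with the same $\beta_{1j}$ and $\beta_{2,j+1}$ is nested. Part (b) is the heart of the theorem; your remark about comparing quadric generators with their linear syzygies only reaches the equigenerated degree-$2$ case. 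The paper proves (b) by induction, peeling off the generators of top degree $r$. It sorts the pairs that can contribute to $\beta_{2,r+1}$ into six types (square/square, square/lower, lower/lower, square/top, lower/top, top/top), bounds each type's contribution from above, and observes that the bounds sum exactly to the licci value. This forces every top/top and lower/top pair to be linear and minimal. Two lemmas then finish the job. One is combinatorial: if all $\binom{\mu}{2}$ pairs of $\mu$ squarefree monomials of one degree are linear and minimal, they have the form $Dz_1,\dots,Dz_\mu$. The other is an extension lemma: such pairs against an already nested lower part force the chain to continue. Without an argument of this kind, nothing excludes a non-licci ideal sharing its Betti table with a licci one. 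That is exactly what happens once squares are replaced by cubes (Example~\ref{counterexample}).
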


	In fact, licci monomial ideals containing a square of the variables must have a particularly nice form and the combinatorics among their generators is tight enough that the licci property can be determined by knowing only the degrees of the generators and first syzygies.

\begin{Theorem}\label{MainThm} Let $R$ be a $g$-dimensional polynomial ring over a field $k$ and suppose $I$ is a monomial ideal in $R$ that contains the square of each variable. Suppose that the squarefree generators of $I$ lie in degrees $2\leq i\leq r$ for some integer $r$.  Then the licci property of $I$ can be detected from the Betti numbers $\beta_{1j}$ and $\beta_{2,{j+1}}$ for $2 \leq j \leq r$.

More explicitly, suppose that for each $j$, $I$ has $b_j$ squarefree minimal generators. Then the following are equivalent: 

\begin{enumerate}
\item $I$ is licci;
\item For all $2\leq j \leq r$ the following hold:
$$\begin{array}{rclclclcl}
	\beta_{23} &= &{b_2 \choose 2} & &&
	+ & 2b_2  \\
	\beta_{24} &=& {b_3 \choose 2} + b_3b_2 & 
	+ & {g \choose 2} - b_2 &
	+ & 3b_3 & 
	+ & (g-b_2-1)b_2\\
	\beta_{25} &=& {b_4 \choose 2} + b_4(b_2+b_3) & 
	+ & &
	+ & 4b_4 & 
	+ & (g-b_2-b_3-2)b_3\\
	& \vdots & \\ 
	\beta_{2,j+1} &=& {b_j \choose 2} + b_j(b_2+\cdots+b_{j-1}) & 
	+ & &
	+ & jb_j & 
	+ & (g-(b_2+ \cdots +b_{j-1}+(j-2)))b_{j-1}\\
\end{array}$$
\item The squarefree generators of $J$ in each degree are, up to relabeling: 
\begin{eqnarray*}
	& & a_1(y_1, \ldots, y_{b_2}),\\	
	& & a_1a_2(z_1, \ldots, z_{b_3}),\\	
	& & \vdots \\
	& & a_1a_2\cdots a_{r-1}(\omega_1, \ldots, \omega_{b_r}),
\end{eqnarray*}
where all the indexed letters denote distinct variables. 
\end{enumerate}
\end{Theorem}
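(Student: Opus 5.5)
The plan is to prove (1)$\Leftrightarrow$(3) using the Huneke--Ulrich algorithm for $\mathfrak m$-primary monomial ideals from \cite{HUMonomial}, and (2)$\Leftrightarrow$(3) by computing first syzygies directly. We write $I=(x_1^2,\dots,x_g^2)+(m_1,\dots,m_s)$, where the $m_i$ are the squarefree minimal generators; non-squarefree monomials other than the $x_i^2$ cannot be minimal generators. The algorithm links $I$ to $(x_1^{2},\dots,x_g^{2}) : I$ and repeatedly uses the lowering step: one may replace $x^a$ by $x^{a-1}$ whenever $x$ appears in the other generators with exponent at most $a-2$. The ideal is licci if and only if this process ends at a complete intersection.

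For (3)$\Rightarrow$(1), I would induct on $r$. If $I$ has the shape in (3), link $I$ via the complete intersection of the squares. Equivalently, work modulo $a_1$: the colon by $a_1$ produces an ideal of the same shape with one fewer degree layer in the variables other than $a_1$. The ideal $(a_1^2, a_1y_1,\dots,a_1y_{b_2}, \dots)$ can be handled directly, since monomial linkage through $(a_1^2,\dots)$ replaces $a_1^2$ by $a_1$ once the lowering condition holds. Concretely, I would compute one link explicitly and observe that it is of type (3) with parameters $(b_3,\dots,b_r)$ and $g$ decreased, or else is a complete intersection.

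For (1)$\Rightarrow$(3), the key point is the necessary condition from \cite{HUMonomial} that a licci monomial ideal must admit a lowering step (or a linkage to a smaller ideal). Applied to squarefree generators together with squares, a lowering step requires some variable $a_1$ that divides every squarefree generator of degree $\ge 3$ and divides all generators of degree $2$ except squares. Otherwise no exponent exceeds the others by $2$ after linking, and the link is not smaller. One then passes to the ideal obtained by lowering $a_1^2$ to $a_1$ and colon, and inducts. I expect this step to be the main obstacle: one must show that non-nested configurations, such as two degree-2 generators $xy, zw$ with disjoint support, force the algorithm to stall at a non-complete-intersection. I would first try to use the invariant that linkage preserves the number of generators up to the known formula, together with the observation that ideals containing all squares link to ideals of the same type.

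For (2)$\Leftrightarrow$(3), I would compute $\beta_{2,j+1}$ by Taylor/lcm considerations. Minimal syzygies in degree $j+1$ among squarefree generators of degree $\le j$ come in several kinds. Pairs of degree-$j$ generators that share $j-1$ variables give the $\binom{b_j}{2}$ and $b_j(b_2+\dots+b_{j-1})$ terms. Syzygies between $x^2$ and a generator divisible by $x$ give the $jb_j$ term. The term $\binom g2 - b_2$ counts pairs $x^2,y^2$ with $xy\notin I$, appearing in degree $4$, and the last column counts Koszul-type relations $x\cdot m$ between degree-$(j-1)$ generators and variables not involved. In general, $\beta_{2,j+1}$ of an arbitrary such ideal is bounded below by this expression, via an inequality on the number of pairs of generators with small lcm. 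Equality should hold exactly when the supports are nested as in (3). I would prove this degree by degree, showing inductively that equality at $j$ forces all degree-$j$ squarefree generators to share the common factor $a_1\cdots a_{j-1}$.
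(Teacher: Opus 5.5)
Your route to (1)$\Leftrightarrow$(3) is essentially the paper's, since both rest on the Huneke--Ulrich criterion. In the square case that criterion is exactly Algorithm~\ref{alg1}: if $|G|\ge 2$ and $\gcd(G)=1$ then $I$ is not licci; otherwise $I$ is licci iff the set obtained by dividing by the gcd and discarding pure variables is. So the ``main obstacle'' you flag, showing that non-nested configurations such as $xy,zw$ are not licci, is precisely what the cited theorem supplies. It should not be re-derived from generator counts. Your one-variable-at-a-time version ($I\mapsto I:a_1$, then discard the pure variables) is fine, because $I:a_1$ is a double link of $I$ when $a_1$ divides every squarefree generator. Your general phrasing of the lowering step (``exponent at most $a-2$'') is not the right condition. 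What is needed, and what you actually use, is that $x$ divides every non-pure-power generator.

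The genuine gap is in (2)$\Rightarrow$(3). You claim that $\beta_{2,j+1}$ is bounded \emph{below} by the expression in (2) for arbitrary $I$, with equality iff the supports are nested. This is false: in Example~\ref{Example8}, $Q+(ab,acd,acef,acxy)$ has $\beta_{2,5}=16$, while the formula gives $17$. Counting Taylor pairs with small lcm naturally gives an \emph{upper} bound. Even that bound holds only once the lower-degree generators are known to be nested, since that structure is what lets you count the non-minimal quadratic pairs $(y_{pq}^2,h)$.

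The paper therefore inducts by removing the top-degree generators; the $\beta_{2,s}$ for $s\le r$ are unchanged, so by induction the remaining generators are nested. It then shows that each of the six pair types contributes at most its term to $\beta_{2,r+1}$. Equality forces every pair of top-degree generators, and every pair (lower generator, top generator), to be linear \emph{and} minimal. From there two combinatorial facts, which you have not supplied, finish the proof:
\begin{itemize}
\item Proposition~\ref{Prop13}: pairwise linear, minimal Taylor syzygies among equigenerated squarefree monomials force the form $D(z_1,\dots,z_n)$. Minimality is essential here, as $xy,yz,xz$ shows.
\item Lemma~\ref{lemma:addlastdeg}: $a_1\cdots a_{e-1}$ divides $D$ and the remaining variables are new, after a relabelling when some $a_j$ occurs in only one generator.
\end{itemize}

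There is a smaller gap in (3)$\Rightarrow$(2). Your count of minimal Taylor pairs equals $\beta_2$ only after you check that the non-minimal Taylor syzygies lie in the span of the minimal ones (Lemma~\ref{lemma10}). This is not automatic: for $(xy,yz,xz)$ there are no minimal pairs at all, yet $\beta_2=2$.
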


As an illustration of the theorem, below are the Betti tables of two $\mathfrak{m}$-primary monomial ideals in 6 variables that contain the square of each variable and 4 additional cubic generators. Theorem \ref{MainThm} says that the corresponding ideals will be licci if and only if $\beta_{34} = 33$.   See Example \ref{Example8} for a slightly larger example.
\begin{center}
\includegraphics[width=4in]{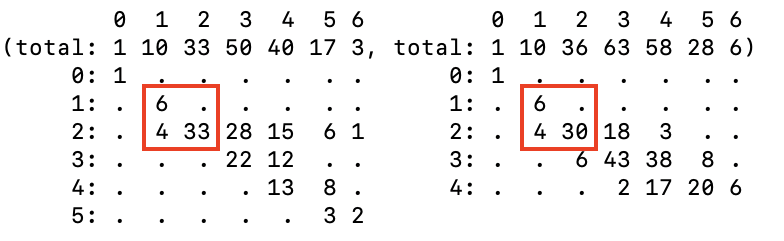}
\end{center}

Evidently, when the monomial ideal contains the squares of the variables, the licci property is strong enough to force this strict structure.  However, given the relatively simple ideals in Example \ref{counterexample}, once the degree grows, there cannot be such a characterization.  It is interesting to ask whether this behavior can be generalized.

\begin{Question}
Outside the monomial case, if $I$ contains a maximal regular sequence of quadrics, can the licci property be detected from the Betti table of $R/I$?  
\end{Question}

\begin{Question}
What other squarefree monomial ideals $J$ have the property that their structure (up to a relabeling of the variables) can be determined completely from knowing the first two columns of the Betti table of $(x_1^2,\ldots, x_g^2) + J$? 
\end{Question}

The proof of Theorem \ref{MainThm} uses an algorithm in \cite{HUMonomial} which determines whether an $\mathfrak{m}$-primary monomial ideal $I$ is licci.   Once familiar with the algorithm it will be clear that (3) $\iff$ (1).  A careful calculation with the Taylor resolution will show that $(3) \implies (2)$.  Finally a combination of the earlier methods will be used to complete the proof and show that $(2) \implies (3)$.

\medskip 

\noindent {\bf Notation:} For the rest of the paper, $R$ will denote a polynomial ring of dimension $g$ and $I\subset R$ will be a monomial ideal that contains the squares of the variables.  The ideal generated by the squares of the variables will be denoted by $Q$, and we will write $I = Q + J$, where $J=(G)$ is a squarefree monomial ideal whose set of minimal monomial generators is $G$.

\section{The Equivalence of (1) and (3)}\label{sec:2}
This section summarizes and uses the Algorithm in \cite{HUMonomial} to establish part of our main theorem.   Note that their algorithm applies to any ${\mathfrak m}$-primary ideal.  In the case we consider here, where $I$ contains the squares of the variables, the algorithm becomes slightly simpler than in the general case.  We state that simpler version. 

\begin{Algorithm}[Huneke-Ulrich Theorem 2.6 in \cite{HUMonomial}]\label{alg1}
Consider the following algorithm ${\bf A(G)}$ which takes input $G$, a set of squarefree monomials, and returns {\bf True} or {\bf False} as follows:
\begin{itemize}
	\item If $|G|\leq 1$, return {\bf True.}
	\item Else, if the gcd of the elements of $G$ is $1$ then return {\bf False}. 
	\item Else, let $h= \gcd(G)$ and define $$G' =\{g/h \ | \ g\in G \mbox{ and }\deg (g/h) \geq 2\}.$$ (We have divided all the elements by their gcd and removed any pure variables that remain).  Return $\mathbf{A(G')}$. 
	\end{itemize}
Let $I$ be a monomial ideal containing the square of each variable and let $G$ be the set of squarefree minimal generators. Then $I$ is licci if and only if the algorithm above applied to $G$ returns {\bf True}.
\end{Algorithm}

\begin{Example}\label{Example8}  Suppose $I = Q + (ab,acd,acef)$ 
in the ring $k[a,b,c,d,e,f].$  
Then in the algorithm we would begin by noting $G = a\{b,cd,cef\}$.  We'd then apply the algorithm to $G' = \{cd,cef\} = c\{d,ef\}$ and then finally to $G'' = \{ef\}$ to conclude that $I$ is licci. Note that the ideal $(G)$ can be written as 
$$\textcolor{blue}{a}(b) + \textcolor{blue}{a}\textcolor{red}{c}(d) + \textcolor{blue}{a}\textcolor{red}{c}\textcolor{orange}{e}(f).$$
In the notation of Theorem \ref{MainThm}, we have that $g = 6$ and $(b_2,b_3,b_4) = (1,1,1)$.  Part (2) asserts that the entries in the second column of the Betti table should be $2,22,8$. The Betti table of $I$ is given below (on the left) and we can see that these numbers match. 

If instead, $I = Q + (ab,acd,acef,acxy)$ (now in a ring with $g = 8$ variables) then the algorithm would begin as in the first example, but would conclude with calculating ${\bf A(G'')}$ with $G'' = \{ef,xy\}$ and thus $I$ is not licci. Here we have $(b_2,b_3,b_4) = (1,1,2)$ and we can see that the formula would predict that $\beta_{2,5} = \textcolor{red}{17}$, instead of the true value of $16$.  The Betti tables are respectively given below.

\begin{center}
\includegraphics[width=5in]{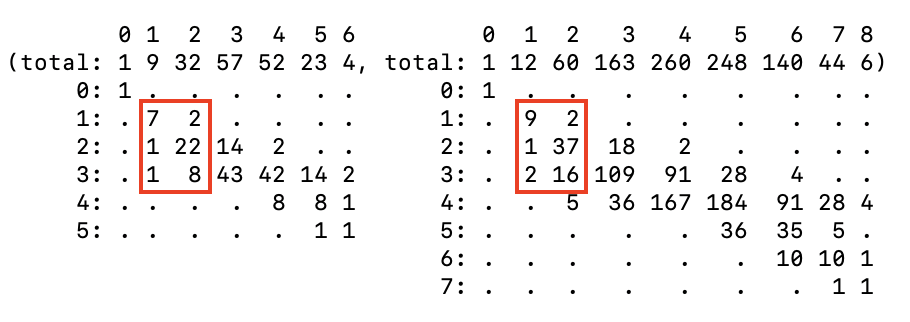}
\end{center}
\end{Example}
As a running example throughout the paper, we will consider the special case when all the generators have degree $2$. 

\begin{Example}\label{ex6}
Suppose that $I =Q + (G)$ where $G$ consists of quadratic squarefree monomials. Let $J = (G)$.  Consider what Algorithm \ref{alg1} says. If $J$ has height 2 or more, then $I$ is not licci.  However if $J$ has height $1$, then evidently $J = a(y_1, \ldots, y_n)$ for some distinct variables $a, y_i$, and thus $I$ is licci.  We have the following special case of the equivalence between $(1)$ and $(3)$ in Theorem \ref{MainThm}.  
\begin{center}
	$I$ is licci if and only if $J = \textcolor{blue}{a}(y_1, \ldots, y_n)$ for some variables $a, y_i$.
\end{center}
\end{Example}

\begin{Lemma}\label{lemma:samedegree}
Suppose that $G$ is a set of squarefree monomials.  Suppose that the minimal degree of a monomial in $G$ is $d\geq 2$. Let $h = \gcd(G)$.  Suppose that in Algorithm \ref{alg1}, ${\bf A}(G)$ returns {\bf True}.  Then 
$$\deg h = \begin{cases}
 	d-1 & \mbox{if $|G| \geq 2$ }\\
 	d & \mbox{if $|G|=1$} \\ 	
 \end{cases}.$$
Thus if $|G| \geq 2$, the minimal generators of smallest degree $d$ are of the form $\{hy_1, hy_2, \ldots, hy_m\}$ for some distinct variables $y_1, \ldots, y_m$.
\end{Lemma}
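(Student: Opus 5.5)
The argument is a direct analysis of one step of Algorithm \ref{alg1}. The case $|G|=1$ is immediate: if $G=\{m\}$ then $h=\gcd(G)=m$, so $\deg h=\deg m=d$. So assume $|G|\geq 2$. The strategy is to obtain the upper bound $\deg h\leq d-1$ from minimality of the generators, and the lower bound $\deg h\geq d-1$ from the fact that the recursion must eventually terminate with at most one monomial.

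For the upper bound, fix $m\in G$ of degree $d$. Since $h\mid m$, we have $\deg h\leq d$. If $\deg h=d$, then $h=m$, so $m$ divides every element of $G$. Here I use that $G$ is a set of minimal generators, which is the setting of the lemma since $G$ comes from $I$, and each recursive $G'$ is again an antichain under divisibility. Under this assumption, $m$ dividing another element contradicts minimality when $|G|\geq 2$. Hence $\deg h\leq d-1$.

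For the lower bound, we know ${\bf A}(G)$ returns \textbf{True} and $|G|\geq 2$, so $\gcd(G)\neq 1$ and the algorithm continues with $G'=\{g/h : \deg(g/h)\geq 2\}$. Suppose $\deg h\leq d-2$. Then every $g\in G$ satisfies $\deg(g/h)\geq d-\deg h\geq 2$, so no element is discarded and $|G'|=|G|\geq 2$. Moreover $G'$ has minimal degree $d-\deg h\geq 2$, the elements of $G'$ are distinct squarefree monomials, and $G'$ is still an antichain. By construction $\gcd(G')=1$. The algorithm therefore returns \textbf{False} on $G'$, and hence on $G$, which is a contradiction. So $\deg h=d-1$.

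Finally, every degree-$d$ generator $m$ is divisible by $h$ with $\deg(m/h)=1$, so $m=hy$ for a single variable $y$. Distinct generators give distinct variables $y_i$, and $y_i\nmid h$ because $m$ is squarefree. This gives the stated form $\{hy_1,\ldots,hy_m\}$.

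The only delicate point is the antichain hypothesis used in the upper bound. If $G$ were an arbitrary set, a degree-$d$ monomial dividing all other elements would make $\deg h=d$ possible. I would therefore state explicitly that $G$ denotes the minimal generators, as in the Notation. The lower bound, where no elements get discarded, is the heart of the proof but is routine.
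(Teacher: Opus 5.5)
Your proof is correct and follows the paper's argument. Your lower-bound step is exactly the paper's proof: if $\deg h\le d-2$, nothing is discarded, $G'$ keeps all $|G|\ge 2$ elements with $\gcd(G')=1$, and so $\mathbf{A}(G')$ returns \textbf{False}. Your explicit upper bound $\deg h\le d-1$ via the antichain property is a step the paper leaves implicit. You are right that it is needed, since for a non-antichain set such as $G=\{ab,abcd\}$ the algorithm returns \textbf{True} while $\deg h=d$.
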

\begin{proof}
The claims are clear if $|G| = 1$, so we suppose $|G|\geq 2$.  Since ${\bf A(G)}$ returns {\bf True}, the first step of Algorithm \ref{alg1} will extract $h$ and then we will need to calculate 
$$G' =\{g/h \ | \ g\in G \mbox{ and }\deg (g/h) \geq 2\}.$$
We argue by contradiction.  Suppose $\deg h < d-1$. Then for each $g\in G$ we will have $\deg (g/h) \geq 2$ and so $G' = \{g/h \ | \ g\in G \}$ and note that these monomials have gcd 1. But then ${\bf A(G')}$ would return {\bf False}, a contradiction.
\end{proof}
\begin{Example}\label{ExDecomp}
It is helpful to see a general application of Algorithm \ref{alg1}.  Suppose that $I$ is an ideal containing the squares of the variables and whose other generators are: 4 cubics, 7 quintics, and 13 sextics, that is $b_{3} = 4$, $b_{5} = 7$, $b_{6} = 13$. Suppose that $I$ is licci.

By Lemma \ref{lemma:samedegree} we have that the 4 cubic generators must be the generators of the ideal  $\textcolor{red}{a_1a_2}(y_1,y_2, y_3,y_4)$ (for distinct variables $a_i,y_j$). The algorithm will then continue, applied to the set $G'$ after we drop the remnants of the cubics and divide the higher order terms by $\textcolor{red}{a_1a_2}$.  $G'$ consists now of monomials of degrees $5-2=3, 6-2=4$. Now since ${\bf A(G')}$ still returns {\bf True}, again by Lemma \ref{lemma:samedegree} we have that these smallest degree generators of $G'$ must have the form $\textcolor{orange}{a_3a_4}(z_1\ldots, z_7)$, again for some distinct variables $a_i, c_j$, and then one final application of ${\bf A}$ to the remaining $13$ (now) quadratic generators will show us that our original generators must have been of the form: 
$$Q + \textcolor{red}{a_1a_2}(y_1, \ldots, y_4) + \textcolor{red}{a_1a_2}\textcolor{orange}{a_3a_4}(z_1, \ldots, z_7)  + \textcolor{red}{a_1a_2}\textcolor{orange}{a_3a_4}\textcolor{blue}{a_5}(w_1,\ldots, w_{13}).
$$
\end{Example}

The following theorem gives the general case of this phenomenon and follows from Lemma \ref{lemma:samedegree} via induction.
\begin{Theorem}\label{Thm5}
	Suppose that $I$ is a licci monomial ideal containing the squares of the variables. Suppose that $I$ has $b_j\geq 0$ squarefree generators for $2\leq j \leq r$, (where $b_r>0$). Then there exist distinct variables $a_1,\ldots, a_{r-1}$ and (still distinct) variables $y_{ij}$ 
	such that the squarefree generators of $I$ in degree $j$ are precisely the monomial generators of the ideal $a_1\cdots a_{j-1}(y_{j1}, y_{j2}, y_{j3}, \ldots, y_{jb_j})$.
	
	Conversely, any ideal (containing the square of each variable) whose squarefree generators have this form is licci. 
\end{Theorem}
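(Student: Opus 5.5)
The plan is to prove both directions by induction on the number of recursive calls of Algorithm \ref{alg1}, with Lemma \ref{lemma:samedegree} as the engine for the forward direction. Throughout we assume $G$ is the set of minimal squarefree generators of $I$, so no element of $G$ divides another.

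\emph{Forward direction.} Suppose $I$ is licci, so ${\bf A}(G)$ returns {\bf True}. We induct on $|G|$, or equivalently on $r$.
\begin{enumerate}
\item If $|G|\le 1$, the unique generator has degree $r$. Write it as a product $a_1\cdots a_{r-1}y_{r1}$ and the statement holds, with $b_j=0$ for $j<r$.
\item If $|G|\ge 2$, let $d$ be the smallest degree occurring in $G$. Lemma \ref{lemma:samedegree} gives $\deg h=d-1$ for $h=\gcd(G)$, and it says the degree-$d$ generators are exactly $hy_1,\dots,hy_{b_d}$ for distinct variables $y_i$ not dividing $h$. Since $d$ is minimal, $b_j=0$ for $j<d$.
\item Write $h=a_1\cdots a_{d-1}$. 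This matches the claimed shape in degree $d$, and the empty degrees below $d$ hold vacuously.
\item Form $G'$: it consists of the $g/h$ with $\deg g>d$, so it is squarefree and has degrees $j-(d-1)\ge 2$.
\item Check that $G'$ is again a minimal generating set. If $g_1/h\mid g_2/h$ then $g_1\mid g_2$, which is impossible.
\item Since ${\bf A}(G')$ returns {\bf True}, apply the inductive hypothesis to $G'$. It supplies variables $a'_1,a'_2,\dots$ and $y'_{j}$, and the generators of $G'$ of degree $j-d+1$ are $a'_1\cdots a'_{j-d}(y'_{j,\cdot})$.
\item Multiply back by $h$. The generators of $G$ of degree $j$ become $a_1\cdots a_{d-1}a'_1\cdots a'_{j-d}(\dots)$. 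Setting $a_{d-1+i}=a'_i$ gives the required form, because $(d-1)+(j-d)=j-1$.
\end{enumerate}

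\emph{Distinctness in the forward direction.} This is the main obstacle. We must show that the variables $a'_i$ and $y'_{j\cdot}$ are distinct from $a_1,\dots,a_{d-1}$ and from the $y_i$.
\begin{enumerate}
\item The elements of $G'$ are $g/h$ with $g$ squarefree, so no $a_k$ divides them.
\item If some $y_i$ divided an element $g/h$ of $G'$, then $hy_i\mid g$. This contradicts minimality of $G$.
\item So the variables of $G'$ avoid all the $a_k$ and all the $y_i$. Distinctness among the variables of $G'$ themselves comes from the inductive hypothesis.
\end{enumerate}
One subtlety remains. If $G'$ has a single element, we may choose its factorization arbitrarily from its own variables, which are still disjoint from the earlier ones.

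\emph{Converse.} Given generators of the stated form, we run Algorithm \ref{alg1} and show it returns {\bf True}.
\begin{enumerate}
\item Let $d$ be the least degree with $b_d>0$. If $|G|=1$, the algorithm returns {\bf True} immediately.
\item Otherwise, every generator is divisible by $a_1\cdots a_{d-1}$.
\item The gcd is exactly $a_1\cdots a_{d-1}$, by the following cases.
\begin{enumerate}
\item If $b_d\ge 2$, the degree-$d$ generators differ in their last variable.
\item If $b_d=1$ and there are higher generators, then $y_{d1}$ divides no other generator by distinctness.
\item Every other variable $a_k$ with $k\ge d$, and every $y_{j\cdot}$ with $j>d$, is missing from the degree-$d$ generators.
\end{enumerate}
\item Dividing by this gcd turns the degree-$d$ generators into pure variables, which the algorithm removes.
\item The remaining set $G'$ has the same shape, with one fewer nonempty degree, in the variables $a_d,a_{d+1},\dots$ and the $y$'s. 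By induction on the number of nonempty degrees, ${\bf A}(G')$ returns {\bf True}.
\end{enumerate}
Hence ${\bf A}(G)$ returns {\bf True}, and $I$ is licci by Algorithm \ref{alg1}.
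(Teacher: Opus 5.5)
Your proof is correct and takes the paper's route: the paper only says that the theorem follows from Lemma \ref{lemma:samedegree} by induction (illustrated in Example \ref{ExDecomp}), with the converse read off directly from Algorithm \ref{alg1}. Your argument writes out that induction in full, including two points the paper leaves implicit: that $G'$ is again a minimal generating set, and that the new variables are disjoint from $a_1,\dots,a_{d-1}$ and the $y_i$ by minimality of $G$.
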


This theorem completes the proof that $(1)$ and $(3)$ are equivalent in Theorem \ref{MainThm}.

\section{The Taylor Resolution and the proof that (3) $\implies$ (2)}\label{sec3}

Given such a tight characterization of licci ideals, now all that remains is to show the relationship between this structure and the Betti numbers $\beta_{1j}(R/I)$ and $\beta_{2j}(R/I)$.  We will use the Taylor resolution, which we review below. For more details see, e.g. \cite{taylorthesis, millersturmfuls,peevaBook,merminThree}. 

\medskip 

\noindent {\bf The Taylor Resolution -}
Let $M$ be a monomial ideal whose minimal generators are $f_1, \ldots, f_b$.  Let $F_1, F_2, F_3$ be free modules with bases $e_i$, $e_{ij}$, $e_{ijk}$ respectively where $i<j<k \in \{1, \ldots, b\}$ are distinct. The Taylor resolution begins with with following exact sequence: 
$$
\xymatrix{
F_3 \ar[r]^{\gamma} & F_2 \ar[r]^{\psi} & F_1 \ar[r]^\phi & R \ar[r] & R/M
}
$$
whose maps are basically the maps in the Koszul complex, after dividing by the natural common factors: 
\begin{eqnarray*}
	\phi(e_i) &=& f_i\\
\psi(e_{ij}) &=& \frac{\lcm(f_i,f_j)}{f_i} e_i - \frac{\lcm(f_i,f_j)}{f_j} e_j\\
\gamma(e_{ijk}) &=& \frac{\lcm(f_i,f_j,f_k)}{\lcm(f_i,f_j)} e_{ij} 
-\frac{\lcm(f_i,f_j,f_k)}{\lcm(f_i,f_k)} e_{ik}
+\frac{\lcm(f_i,f_j,f_k)}{\lcm(f_j,f_k)} e_{jk}.
\end{eqnarray*}

When we refer to ``the Taylor syzygy on the pair $(f_i,f_j)$'' we mean the element $\psi(e_{ij})$.  
In general the Taylor resolution is not minimal. However, note that the matrices for $\phi, \psi$ do have entries in the maximal ideal, so any potential non-minimality will be from the map $\gamma$, if one of the coefficients is $\pm 1$.  The coefficient of $e_{ij}$ is $\pm 1$ precisely when $\lcm(f_i,f_j,f_k) = \lcm(f_i,f_j)$ for some $k$.  When this happens, the syzygy $\psi(e_{ij})$ can be written as a combination of other syzygies $\psi(e_{jk}), \psi(e_{ik})$. 

We call a syzygy $\psi(e_{ij})$ {\bf minimal} if for each $k\notin \{i,j\}$, $\lcm(f_i,f_j) \neq \lcm(f_i,f_j,f_k)$. Otherwise, we call the syzygy {\bf non-minimal}. 

\begin{Remark}
	The fact that the Taylor resolution is exact implies that the ${b \choose 2}$ syzygies in the Taylor resolution fully generate $\ker \phi$.  If there are non-minimal syzygies then some of these syzygies are redundant and may be removed.  In general, one must take care when doing this. For instance, if $M = (xy,yz,xz)$ then {\bf all} of the Taylor syzygies are non-minimal. Obviously one cannot remove all three.  This type of behavior will not happen for the ideals in this paper and the following technical lemma gives a condition that will suffice for our purposes. 

	\end{Remark}

\begin{Lemma}\label{lemma10}
Let $M$ be a monomial ideal. Suppose that each non-minimal Taylor syzygy is in the span of the minimal Taylor syzygies.  Then the minimal syzygies span all of $\ker \phi$.  In this case, $\beta_2(R/M)$ is equal to the number of minimal Taylor syzygies. 
\end{Lemma}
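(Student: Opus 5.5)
The first claim follows from the Taylor resolution's exactness together with the hypothesis. The Taylor complex is a free resolution of $R/M$, so the $\binom{b}{2}$ Taylor syzygies $\psi(e_{ij})$ generate $\ker\phi$. By hypothesis, each non-minimal Taylor syzygy lies in the span $N$ of the minimal ones. Hence $N$ contains every generator of $\ker\phi$, and so $N=\ker\phi$. This gives the first assertion.

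For the count, recall that $\beta_2(R/M)$ is the minimal number of generators of $\ker\phi$. Since $\ker\phi$ is graded (indeed $\mathbb{Z}^g$-graded), this number equals $\dim_k \ker\phi/\mathfrak m\ker\phi$. We already have a generating set of minimal Taylor syzygies, so it remains to show that these are linearly independent modulo $\mathfrak m\ker\phi$. Equivalently, no minimal syzygy $\psi(e_{ij})$ can be omitted.

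To see this, work in the multigrading. The syzygy $\psi(e_{ij})$ is homogeneous of multidegree $m=\lcm(f_i,f_j)$. Suppose that $\psi(e_{ij})\in \mathfrak m\ker\phi + (\text{the other minimal syzygies})$. Since $\ker\phi$ is generated by Taylor syzygies, we may write
$$\psi(e_{ij})=\sum_{(k,l)\neq(i,j)} c_{kl}\,\psi(e_{kl}) + (\text{terms in } \mathfrak m\ker\phi),$$
where all terms are of multidegree $m$. We compare the coefficients of the basis vector $e_i$ on both sides. On the left it is $m/f_i$. On the right, the only syzygies involving $e_i$ are the $\psi(e_{ik})$, each contributing $c_{ik}\,\lcm(f_i,f_k)/f_i$.

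A homogeneous coefficient $c_{ik}$ of multidegree $m/\lcm(f_i,f_k)$ exists only if $\lcm(f_i,f_k)$ divides $m$. Moreover, for the monomial $m/f_i$ to appear, some term must have $c_{ik}$ a unit scalar, which forces $\lcm(f_i,f_k)=m$. Terms lying in $\mathfrak m\ker\phi$ have nonunit coefficients and so cannot produce this monomial either. The condition $\lcm(f_i,f_k)=m$ implies that $f_k$ divides $m$, that is, $\lcm(f_i,f_j,f_k)=\lcm(f_i,f_j)$. This contradicts the minimality of $\psi(e_{ij})$.

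Therefore the minimal syzygies form a minimal generating set of $\ker\phi$, and $\beta_2(R/M)$ equals their number. The main care needed is in the multigraded comparison that excludes a unit coefficient, in particular checking that contributions from $\mathfrak m\ker\phi$ cannot produce the pure monomial $m/f_i$. This holds because any element of $\mathfrak m\ker\phi$ of multidegree $m$ has $e_i$-coefficient in $\mathfrak m\cdot(f_i\text{-colon})$, which is strictly divisible beyond $m/f_i$ unless $m/f_i$ already appears with a proper divisor.
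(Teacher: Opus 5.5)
Your first paragraph (generation of $\ker\phi$) is correct and matches the paper. The gap is the step where you compare $e_i$-coefficients. In multidegree $m=\lcm(f_i,f_j)$, the $e_i$-coordinate of \emph{every} homogeneous element of $F_1$ is a scalar multiple of the single monomial $m/f_i$. A non-unit homogeneous $c_{ik}$ of multidegree $m/\lcm(f_i,f_k)$ contributes $c_{ik}\lcm(f_i,f_k)/f_i=\lambda\, m/f_i$. Elements of $\mathfrak m\ker\phi$ of multidegree $m$ also contribute scalar multiples of $m/f_i$. So the comparison gives one scalar equation and cannot force a unit coefficient.

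A concrete failure: take $M=(x^2,xy,y^2)$. Then
$$\psi(e_{13})=y\,\psi(e_{12})+x\,\psi(e_{23}),$$
where $\psi(e_{12})$ and $\psi(e_{23})$ are themselves minimal. The $e_1$-coefficient $y^2=m/f_1$ comes from the non-unit $c_{12}=y$, even though $\lcm(f_1,f_2)=x^2y\neq m$. Likewise $y\,\psi(e_{12})\in\mathfrak m\ker\phi$ produces exactly $m/f_1$. Your steps before the last line never use minimality of $\psi(e_{ij})$, so here they would ``prove'' $\lcm(f_1,f_2)=x^2y^2$, which is false. Your closing claim that $\mathfrak m\ker\phi$-terms are ``strictly divisible beyond $m/f_i$'' is incorrect for the same reason.

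The missing ingredient is exactness of the Taylor complex one step further, at $F_2$. A relation among the $\psi(e_{kl})$ is an element of $\ker\psi=\operatorname{im}\gamma$. Minimality of $\psi(e_{ij})$ says precisely that every entry in the $e_{ij}$-row of $\gamma$, namely $\pm\lcm(f_i,f_j,f_k)/\lcm(f_i,f_j)$, lies in $\mathfrak m$. This is the paper's argument: if $\psi(e_{ij})=\sum c_{kl}\psi(e_{kl})$ with $c_{ij}=0$, then $e_{ij}-\sum c_{kl}e_{kl}\in\operatorname{im}\gamma$ has $e_{ij}$-coefficient $1\notin\mathfrak m$, a contradiction.

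Your framing via $\mathfrak m\ker\phi$ fits this directly. Since the Taylor syzygies generate $\ker\phi$, a dependence modulo $\mathfrak m\ker\phi$ can be rewritten as $\psi(e_{ij})=\sum_{k<l}c_{kl}\psi(e_{kl})$ with $c_{ij}\in\mathfrak m$. Then $(1-c_{ij})e_{ij}-\sum_{(k,l)\neq(i,j)}c_{kl}e_{kl}\in\operatorname{im}\gamma$ has a unit $e_{ij}$-coefficient, giving the same contradiction. This version also yields linear independence modulo $\mathfrak m\ker\phi$ directly, without appealing to graded Nakayama.
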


\begin{proof}
Since the complete set of Taylor syzygies generates $\ker \phi$, the first claim is clear.  The second follows from the fact that none of the minimal Taylor syzygies can be written as a combination of the others.  Indeed, suppose without a loss of generality that $\psi(e_{12}) = \sum_{i<j} c_{ij} \psi(e_{ij})$ where $c_{12} = 0$. Then the element $e_{12} - \sum_{i<j} c_{ij}e_{ij}$ (whose coefficient of $e_{12}$ is 1) is in the kernel of $\psi$.  But by exactness this means that 
$$e_{12} - \sum_{i<j} c_{ij}e_{ij} = 
\sum_{i<j<k} {\alpha}_{ijk} \gamma(e_{ijk})
$$
but this is impossible since the coefficients of $e_{12}$ in $\gamma(e_{12k})$ will always be in the maximal ideal since $\psi(e_{12})$ is a minimal syzygy. 
\end{proof}

We will call a syzygy on $f_i,f_j$ {\bf linear}, (respectively {\bf quadratic)} if the degree of $\lcm(f_i,f_j)$ is one (respectively two) larger than $\max(\deg f_i, \deg f_j)$.   We say the {\bf degree} of the syzygy is $\deg \lcm(f_i,f_j)$. For instance, the Taylor syzygy on the pair $(xyz, xywuv)$ is linear of degree 6 and the Taylor syzygy on $(x^2, yzw)$ is quadratic of degree 5. 

\begin{Example}
Before we prove the general case, we show what the Taylor resolution says about a licci ideal generated in degree $2$, continuing Example \ref{ex6} by showing that $(3) \implies (2)$ in Theorem \ref{MainThm}.	We know that 
$$I = Q + {a}(y_1, \ldots, y_n).$$
We will show that $\beta_{23}(R/I) = {n\choose 2} + 2n$.  Note that the only syzygy pairs that can contribute to $\beta_{23}(R/I)$ would be linear syzygies, where the generators share a common factor.  There are ${n\choose 2}$ of type $(ay_i,ay_j)$ and exactly $2n$ of type $(ay_i,x^2)$ (where $x$ is one of the factors of $ay_i$). It is straightforward to check that these are all minimal syzygies and so we are done by Lemma \ref{lemma10}.
\end{Example}

The following proposition will complete the proof that $(3) \implies (2)$ in Theorem \ref{MainThm}.
\begin{Proposition}\label{Prop12}
	Suppose that $I = Q + (G)$ is an ideal of the form (3) in Theorem \ref{MainThm}.  That is, there are distinct variables $a_i,y_{\ell m}$ so that the degree $j$ generators of $G$ are of the form 
	$$a_1\cdots a_{j-1}(y_{j1}, y_{j2}, y_{j3}, \ldots, y_{jb_j}).$$
The non-minimal Taylor syzygies are all in the span of the minimal ones. Thus 
$$\beta_{2j}(R/I)=\mbox{ the number of minimal Taylor syzygies of degree $j$}.$$
The formulas in (2) of Theorem \ref{MainThm} are these numbers. 
\end{Proposition}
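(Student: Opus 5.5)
The plan is to enumerate all pairs of minimal generators of $I$ and sort their Taylor syzygies by degree and type. For each type I will decide minimality by asking which third generator $f_k$ can divide $\lcm(f_i,f_j)$. Each non-minimal syzygy will then be rewritten through minimal ones, so that Lemma \ref{lemma10} applies. Finally I will count the minimal syzygies in each degree $j+1$.

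Write $p_j=a_1\cdots a_{j-1}$, so the degree $j$ squarefree generators are $p_jy_{jk}$. The key fact is that all the letters $a_i,y_{\ell m}$ are distinct. Hence a squarefree generator $p_iy_{il}$ divides a squarefree monomial $u$ only if $a_1,\dots,a_{i-1}$ and $y_{il}$ all divide $u$. Also, a square $x^2$ never divides a squarefree $\lcm$. The pairs fall into five types.

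\begin{enumerate}
\item[(a)] \emph{Two squarefree generators of the same degree $j$.} The $\lcm$ is $p_jyy'$, a linear syzygy of degree $j+1$. No other generator divides it: a lower-degree $p_iy_{il}$ would need $y_{il}\in\{a_i,\dots,a_{j-1},y,y'\}$, and a degree $j+1$ generator would need $a_j\in\{y,y'\}$. These syzygies are minimal and contribute $\binom{b_j}{2}$.
\item[(b)] \emph{A degree $j$ generator with a degree $i<j$ generator.} The $\lcm$ is $p_jy_{jk}y_{il}$, of degree $j+1$. The same distinctness argument shows minimality, giving $b_j(b_2+\cdots+b_{j-1})$.
\item[(c)] \emph{A pair $(x^2,m)$ with $x\mid m$ and $\deg m=j$.} The $\lcm$ is $xm$. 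Any squarefree generator dividing $xm$ divides $m$, so it is $m$ itself. These are minimal, giving $jb_j$.
\item[(d)] \emph{A pair $(x^2,m)$ with $x\nmid m$.} The syzygy is quadratic of degree $\deg m+2$, and $m=p_{j-1}y_{j-1,k}$ contributes to degree $j+1$. It is non-minimal exactly when some other generator divides $xm$. A degree $j$ generator $p_jy$ would force $x=a_{j-1}$ and $y=y_{j-1,k}$, which is impossible. So the only possibility is $x=y_{il}$ for some generator $p_iy_{il}$ with $i\le j-1$, other than $m$ itself.

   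Thus the minimal ones are those with $x$ outside the $j-1$ factors of $m$, outside the $b_2+\cdots+b_{j-2}$ letters $y_{il}$ with $i\le j-2$, and outside the other $b_{j-1}-1$ letters of degree $j-1$. That leaves $g-(b_2+\cdots+b_{j-1}+(j-2))$ choices of $x$ for each of the $b_{j-1}$ choices of $m$, matching the last term of (2).
\item[(e)] \emph{Two squares $(x^2,y^2)$.} These have degree $4$ and are minimal unless $xy\in G$. This gives $\binom g2-b_2$, contributing only to $\beta_{24}$.
\end{enumerate}

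Summing (a) through (e) in degree $j+1$ reproduces the displayed formulas.

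It remains to check the hypothesis of Lemma \ref{lemma10}. The non-minimal syzygies are only those in (d) with $x=y_{il}$, and those in (e) with $xy\in G$. In both cases a generator $f_k$ divides $L=\lcm(f_i,f_j)$, which gives the standard Taylor identity
$$\psi(e_{ij})=\frac{L}{\lcm(f_i,f_k)}\psi(e_{ik})\pm\frac{L}{\lcm(f_k,f_j)}\psi(e_{kj}).$$
I will show that the two syzygies on the right are minimal, so no further recursion is needed.

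\begin{itemize}
\item For (e), with $f_k=xy$, the pairs $(x^2,xy)$ and $(xy,y^2)$ are of type (c), hence minimal.
\item For (d), with $f_k=p_ix$, the pair $(x^2,p_ix)$ is of type (c). The pair $(p_ix,m)$ is of type (a) or (b), or, when $p_ix$ has the same degree as $m$, of type (a). Either way it is minimal.
\end{itemize}

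I expect the main obstacle to be the bookkeeping in (d). One must be careful that $x$ may coincide with an $a_i$ dividing $m$ (excluded as $x\mid m$) or with a same-degree letter $y_{j-1,k'}$. Reconciling this case with the $(j-2)$ correction in the formula, and with the separate form of the $\beta_{23}$ and $\beta_{24}$ rows, is where errors are most likely.
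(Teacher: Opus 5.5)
Your proposal is correct and follows the paper's proof essentially step for step. It uses the same split into squarefree--squarefree, linear and quadratic square--squarefree, and square--square pairs, the same identification of the non-minimal pairs as $(y_{pq}^2,h)$ with $\deg h\ge p$ and $(x^2,y^2)$ with $xy\in G$, and the same rewriting of each through two minimal syzygies so that Lemma \ref{lemma10} applies. The only cosmetic difference is that you count the admissible $x$ directly in degree $j+1$, whereas the paper subtracts the non-minimal pairs in degree $j+2$ and then shifts indices.
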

\begin{proof}
Let $f,h$ be two monomial generators of $I$ of degrees $i$ and $j$ respectively with $i \leq j$.   

\begin{enumerate} \item If both are squarefree, then 
$$f = a_1\cdots a_{i-1}y, \ \ 
h = a_1\cdots a_{j-1}z,
$$
for some distinct variables $y,z$.  The lcm of $f$ and $h$ is $hy$ and has degree $j+1$.  Note that the Taylor syzygy of this pair will always be minimal, because if $h'$ is any third generator, then either $h'$ is a square or $h'$ is a squarefree generator that will contain a new variable $y_{pq}$.  In either case, $\deg \lcm(f,h,h')\geq j+2$. The total number of such pairs $(f,h)$ is ${b_j \choose 2} + b_j(b_2 + \cdots + b_{j-1})$, which all contribute to $\beta_{2,j+1}$.
\item If $f$ is a square and $h$ is squarefree, then $i=2$ and there are two cases: 
	\begin{enumerate}
	\item {\bf Linear Syzygy} $(x^2,h)$: meaning that the squarefree generator has a factor of $x$.  These are minimal since adding another generator must either add a new variable $y_{pq}$ not appearing in $h$ or be a new square, thus increasing the degree.  There are $b_j$ choices for $h$ and $j$ choices for the factor $x$, so there are $jb_j$ such minimal syzygies that contribute to $\beta_{2,j+1}$.
	\item {\bf Quadratic Syzygy} $\{x^2,h\}$: meaning that the squarefree generator does not have $x$ in it. There are $b_j$ choices for $h$ and $(g-j)$ choices for $x$. Of these $b_j(g-j)$ syzygies, some can be non-minimal. Suppose $h = a_1\cdots {a_{j-1}}z$. Let us consider $\lcm(x^2,h,h')$ as $h'$ ranges over the generators of $I$, with an aim to detect non-minimality.  If $h'$ is a square, then clearly this $\lcm$ will not equal $\lcm(x^2,h)$.  If $h'$ is squarefree, then $h'$ must necessarily involve a new variable $y_{pq}$ not in $h$.  So the only way for $\lcm(x^2,h,h') = \lcm(x^2,h)$ is if $x = y_{pq}$ and $h' = {a_1}{a_2}\cdots a_{k-1}y_{pq}$.  Note that this forces $k\leq j$, since otherwise $a_{k-1}$ would not be present in $x^2$ or $h$.  Thus the only non-minimal pairs of this type must be of the form $(y_{pq}^2,h)$, where $\deg h\geq p$.  These non-minimal syzygies $(y_{pq}^2,h)$ will be in the span of the minimal (by 2(a) and (1))  syzygies $(y_{pq}^2, h')$ and $(h,h')$ and so by Lemma \ref{lemma10} to find the contribution to the Betti table we can take the total number of pairs and subtract the number of non-minimal ones, for which there are $b_j$ choices for $h$ and $(b_j-1)+b_{j-1}+\cdots+b_2$ choices for $y_{pq}$.  Since these syzygies are quadratic, they contribute to $\beta_{2,j+2}$ an amount of 
	$$b_j(g-j)-b_j((b_j-1)+b_{j-1}+\cdots+b_2).$$
	After simplifying and shifting indices, the contribution to $\beta_{2,j+1}$ will be
	$$b_{j-1}(g- b_2 - b_3 -\cdots - b_{j-1}-(j-2)).$$

	\end{enumerate}
\item If both are squares then $i = j = 2$.  The quadratic syzygy on $(x^2,y^2)$ will be non-minimal if and only if there is no generator $h$ such that $\lcm(x^2,y^2,h) = x^2y^2$.  Clearly this can only happen if $h = xy$ is one of the $b_2$ quadratic generators.  Such non-minimal syzygies $\{x^2,y^2\}$ will be in the span of the minimal (by 2a) syzygies $\{x^2,xy\}$ and $\{y^2,xy\}$. There are ${g \choose 2}$ pairs $(x^2,y^2)$ and thus the number of minimal syzygies will be ${g \choose 2} - b_2$. This number contributes to $\beta_{24}$. \qedhere
\end{enumerate} 
\end{proof}

\section{Proof that $(2)\implies (3)$}

Finally, we are ready to complete the proof of Theorem \ref{MainThm}.  We begin with a continuation of our running example and then some preliminaries. 

\begin{Example}We will continue Example \ref{ex6} and show that the structure can be detected only from the Betti number $\beta_{23}$.  Suppose that $I = Q + (f_1, \ldots, f_n)$ is an ideal generated by $n$ squarefree quadrics and the squares of the variables and $\beta_{23} = {n \choose 2} + 2n$. The only Taylor syzygies that could possibly be linear are the pairs $(x^2,f_i)$ where $x$ divides $f_i$ and the pairs $(f_i,f_j)$ where $\gcd(f_i,f_j) \neq 1$.  There are at most $2n$ of the first type and ${n \choose 2}$ of the second.  Thus it must happen that {\bf all} of the pairs $(f_i,f_j)$ must be minimal and linear.  If $n = 1$ then $I = Q + (ay)$ for some variables $a,y$, as expected.  If $n \geq 1$ then let $a = \gcd(f_1,f_2)$, so $f_1 = ay_1$ and $f_2 = ay_2$. We will show that $a$ is a factor of each $f_j$.  If $n = 2$ we are done.  If $n\geq 3$, suppose without a loss of generality that $f_3$ is not divisible by $a$.  But $(ay_1,f_3)$ and $(ay_2,f_3)$ are both linear syzygies so $f_3$ must be $y_1y_2$, but then $\lcm(f_1,f_2,f_3) = \lcm(f_1,f_2)$, contradicting minimality.
\end{Example}

\begin{Proposition}\label{Prop13}
	Suppose that $J$ is a squarefree monomial ideal generated in a single degree $d$ with $\mu$ generators.  
Suppose that $J$ has ${\mu \choose 2}$ minimal linear syzygies. Then $J = D(y_1, \ldots, y_\mu)$ where the $y_i$ are variables and $D$ is a monomial of degree $d-1$.    Hence by Algorithm \ref{alg1}, the ideal $Q + J$ is licci. 
\end{Proposition}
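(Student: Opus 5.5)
The plan is to generalize the argument from the running example (the degree $2$ case) to arbitrary degree $d$. There are only $\binom{\mu}{2}$ Taylor pairs $(f_i,f_j)$ among the generators of $J$. So the hypothesis of $\binom{\mu}{2}$ minimal linear syzygies should be read as saying that \emph{every} pair $(f_i,f_j)$ is linear and minimal, where minimality is measured within the generating set of $J$, as in Lemma \ref{lemma10}. Since all generators are squarefree of degree $d$, linearity of $(f_i,f_j)$ means $\deg\gcd(f_i,f_j)=d-1$. Equivalently, $f_i=D_{ij}y_i$ and $f_j=D_{ij}y_j$, with $D_{ij}$ of degree $d-1$ and $y_i\neq y_j$ variables.

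If $\mu\le 2$ the conclusion is immediate, so assume $\mu\geq 3$. Set $D=\gcd(f_1,f_2)$, so $f_1=Dy_1$ and $f_2=Dy_2$. I claim $D\mid f_k$ for every $k$. Suppose instead that some $f_3$ is not divisible by $D$. Both $(f_1,f_3)$ and $(f_2,f_3)$ are linear, so $f_3$ shares $d-1$ variables with $f_1$ and $d-1$ variables with $f_2$.

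Since $D\nmid f_3$, the set $\operatorname{supp} f_3$ misses some variable of $D$. Because $f_3$ agrees with $f_1$ in all but one variable, it must contain $y_1$. By the same argument applied to $f_2$, it must contain $y_2$. Hence $f_3$ misses exactly one variable $x$ of $D$, and $f_3=(D/x)\,y_1y_2$. Then
$$\operatorname{supp}(f_3)\subseteq \operatorname{supp} D\cup\{y_1,y_2\}=\operatorname{supp}\lcm(f_1,f_2),$$
so $\lcm(f_1,f_2,f_3)=\lcm(f_1,f_2)$. This contradicts minimality of the syzygy on $(f_1,f_2)$.

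Therefore $D$ divides every $f_k$, and each $f_k=Dy_k$ with $y_k$ a variable. The $y_k$ are distinct because the generators are distinct. This gives $J=D(y_1,\ldots,y_\mu)$.

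For the final sentence, run Algorithm \ref{alg1} on $G=\{Dy_1,\ldots,Dy_\mu\}$. The gcd is $D$ if $\mu\ge2$; dividing by it leaves only pure variables, all of which are discarded. So $G'=\emptyset$ and the algorithm returns \textbf{True}; the case $\mu\leq 1$ returns \textbf{True} immediately. Hence $Q+J$ is licci, which is also the $r=d$ single-degree case of Theorem \ref{Thm5}.

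The main obstacle is interpreting the hypothesis correctly. The counting step only works if "minimal" refers to minimality among the Taylor syzygies of $J$ itself, so that the count forces all pairs to be both linear and minimal. The case analysis showing $f_3=(D/x)y_1y_2$ is the only real combinatorial step, and it goes through as above.
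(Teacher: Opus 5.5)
Your proof is correct and follows essentially the same route as the paper: the count of $\binom{\mu}{2}$ Taylor pairs forces every pair to be linear and minimal. A generator $f_3$ not divisible by $D=\gcd(f_1,f_2)$ is then forced to be $(D/x)y_1y_2$, which gives $\lcm(f_1,f_2,f_3)=\lcm(f_1,f_2)$ and contradicts minimality.

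Your write-up is slightly more careful than the paper's, which justifies $y_1y_2\mid m_3$ only with ``evidently'' and writes $\gcd$ where $\lcm$ is meant. Your final appeal to Algorithm~\ref{alg1}, like the paper's, tacitly uses $d\geq 2$: for $d=1$ and $\mu\geq 2$ the gcd is $1$, so the gcd step would return \textbf{False}.
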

\begin{proof}
Note that the claim is clear if $\mu = 1$, so suppose $\mu \geq 2$.  Since the Taylor resolution is exact, a monomial ideal with $\mu$ generators has at most ${\mu \choose 2}$ total Taylor syzygies, and so the condition in the statement means that all of these syzygies are minimal.  The linearity means that any two monomial generators have a gcd of degree $d-1$.  Take two generators $m_1, m_2$ and let $D$ denote their gcd.  Then we have $m_1 = Dy_1, m_2 = Dy_2$. If $\mu = 2$ we are done.  Suppose $\mu \geq 3$.  We claim that any other generator of $J$ will be divisible by $D$. This will complete the proof. We will argue by contradiction. Suppose there is some $m_3$ that is not divisible by $D$. However, since the syzygies with $m_1, m_2$ must be linear, $m_3$ must share a degree $d-1$ factor with each of $Dy_1$ and $Dy_2$. Evidently then $m_3$ must be divisible by $y_1y_2$.  For degree reasons, the remaining $d-2$ factors of $m_3$ must be factors of $D$.  But then
$$\gcd(m_1,m_2,m_3) = \gcd(m_1,m_2)$$
since $m_3$ introduces nothing new. Therefore the Taylor syzygies are not all minimal, contradicting our assumption.
\end{proof}
 
\begin{Proposition}\label{Prop:QJsingledegree}
	Suppose that $J$ is a squarefree monomial ideal generated in a single degree $d\geq 2$ and $I = Q + J$ has Betti numbers equal to those in the table in (2) of Theorem \ref{MainThm}. Then $I$ is licci.
\end{Proposition}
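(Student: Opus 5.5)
The idea is to reduce to Proposition \ref{Prop13}: we show that the hypothesis on $\beta_{2,d+1}$ forces all $\binom{\mu}{2}$ Taylor syzygies among the squarefree generators to be minimal and linear. Here $\mu=b_d$ and all other $b_j=0$. The relevant equation in (2) is the one for $\beta_{2,d+1}$ in terms of $b_d$. For $d\ge 3$ the last summand involves $b_{d-1}=0$, so the condition reads
$$\beta_{2,d+1}=\binom{\mu}{2}+d\mu .$$
For $d=2$ it reads $\beta_{23}=\binom{\mu}{2}+2\mu$, which is the running example already done. So assume $d\ge 3$.

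First, list which Taylor pairs of $I=Q+J$ can have degree $d+1$.
\begin{itemize}
\item Pairs of squares have degree $4\le d+1$, with equality only if $d=3$. This is a nuisance, but such syzygies land in $\beta_{24}$. For $d=3$ the $\beta_{24}$ count of squares equals $\binom g2-b_2=\binom g2$, so I will instead use only the $\beta_{2,d+1}$ formula and treat the squares separately.
\item A pair $(x^2,h)$ has degree $d+1$ exactly when $x\mid h$, and degree $d+2$ otherwise. This gives at most $d\mu$ linear ones.
\item A pair $(h,h')$ of squarefree generators has degree $d+1$ exactly when it is linear, giving at most $\binom{\mu}{2}$.
\end{itemize}
For $d=3$, I will restrict attention to the multidegrees. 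A square pair has multidegree $x^2y^2$, which is not squarefree-compatible with the other types. I can then use the fine ($\mathbb Z^g$-graded) Betti numbers, or simply note that the Betti table entry $\beta_{24}$ for $d=3$ in the formula already accounts for $\binom g2$ square pairs. That is exactly what the Taylor count gives, since no squarefree quadric exists to make them non-minimal.

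Second, since $\beta_{2,d+1}$ is at most the number of \emph{minimal} Taylor syzygies of degree $d+1$, the count above yields an upper bound of $\binom{\mu}{2}+d\mu$ (plus $\binom g2$ in the $d=3$ case). The justification is that the minimal resolution is a summand of the Taylor resolution and non-minimal pairs cancel against third syzygies of the same multidegree. Equality with the hypothesis then forces every pair $(h,h')$ to be a minimal linear syzygy. Here I must check that the pairs $(x^2,h)$ with $x\mid h$ are always minimal, so that they cannot absorb the slack. Indeed $\lcm(x^2,h)=xh$, and any third generator $h''$ dividing $xh$ is squarefree of degree $d$ dividing $xh$, hence divides $h$ or equals $h$; so the pair is minimal. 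Consequently the $\binom{\mu}{2}$ squarefree pairs must all be minimal and linear in $J$ itself, and minimality in $I$ implies minimality in $J$.

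Finally, Proposition \ref{Prop13} gives $J=D(y_1,\dots,y_\mu)$, and Algorithm \ref{alg1} (or Theorem \ref{Thm5}) shows $I$ is licci. The main obstacle is the rigorous inequality "$\beta_{2,d+1}\le$ number of minimal Taylor syzygies of that degree". I plan to handle it multidegree by multidegree. For a fixed multidegree $m$, $\beta_{2,m}$ is at most the number of Taylor pairs with lcm $m$ that are minimal. This holds because the rank of the Taylor differential $\gamma$ restricted to units covers each non-minimal pair; equivalently, one can use the lcm-lattice/upper Koszul simplicial complex formula $\beta_{2,m}=\dim\tilde H_0$ of the complex on generators dividing $m$ strictly below. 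That comparison, and the $d=3$ bookkeeping with the squares, are the delicate points.
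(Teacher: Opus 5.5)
\textbf{The strategy matches the paper, but the key inequality is false.} Your strategy is the paper's. You count the Taylor pairs that can have degree $d+1$: the $d\mu$ pairs $(x^2,h)$ with $x\mid h$, at most $\binom{\mu}{2}$ pairs $(h,h')$, and the $\binom g2$ pairs of squares when $d=3$. You match this count against the formula in (2) and feed the resulting $\binom{\mu}{2}$ minimal linear syzygies of $J$ into Proposition \ref{Prop13}.

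The gap is in the step you yourself single out. The inequality ``$\beta_{2,m}\le$ number of minimal Taylor pairs with lcm $m$'' is false. So is the reason you give for it, namely that the unit entries of $\gamma$ account for every non-minimal pair. For $(xy,yz,xz)$ and $m=xyz$, all three pairs are non-minimal, but there is only one triple. Hence $\bar\gamma$ has rank $1$ in that degree and $\beta_{2,xyz}=2$; in your simplicial language, $\Delta_{<m}$ is three isolated points.

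This failure occurs inside the class you are treating:
\begin{itemize}
\item For $g=3$ and $I=Q+(xy,xz,yz)=\mathfrak m^2$, one has $\beta_{23}(R/I)=8$. Yet only six of the Taylor pairs of degree $3$, namely $(x^2,xy),\dots,(z^2,yz)$, are minimal.
\item If $J$ is generated by all squarefree monomials of degree $d$ in $d+1$ variables, there is no minimal pair of multidegree $x_1\cdots x_{d+1}$, but $\beta_{2,x_1\cdots x_{d+1}}=d$.
\end{itemize}
So the chain of inequalities you intend to squeeze to equality is not available as stated.

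\textbf{The fix.} Bound by all Taylor pairs and extract minimality from the equality. Because $\psi$ has entries in $\mathfrak m$, one has $\beta_{2,j}(R/I)=N_j-\operatorname{rank}\bar\gamma_j$. Here $N_j$ is the number of Taylor pairs of degree $j$, and $\bar\gamma_j$ is the degree-$j$ part of $\gamma\otimes_R k$.

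Your count gives $N_{d+1}\le\binom{\mu}{2}+d\mu$, plus $\binom g2$ if $d=3$. The hypothesis therefore forces two things:
\begin{itemize}
\item $N_{d+1}$ attains this bound, i.e.\ every pair $(h,h')$ is linear;
\item $\bar\gamma_{d+1}=0$.
\end{itemize}
The latter is equivalent to every pair of degree $d+1$ being minimal. Indeed, a pair with $\lcm(f_a,f_b,f_c)=\lcm(f_a,f_b)$ gives $\bar\gamma(e_{abc})$ a coefficient $\pm1$ on $e_{ab}$ in that same degree. This is what the paper's brief ``there are exactly \dots\ such syzygies and thus they must all be minimal'' amounts to.

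With this replacement, two of your steps are no longer needed: the separate check that the pairs $(x^2,h)$ with $x\mid h$ are minimal, and the multidegree separation of the square pairs for $d=3$. The rest of your argument goes through: minimality in $I$ implies minimality in $J$, and then Proposition \ref{Prop13} and Algorithm \ref{alg1} apply. Also, your first display omits the $\binom g2$ term for $d=3$, although you restore it later.
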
 
\begin{proof}
Suppose that $J$ has $b_d$ generators of degree $d \geq 2$. 	By Proposition \ref{Prop13} it is sufficient to show that $J$ has ${b_d \choose 2}$ minimal linear syzygies. 

\begin{enumerate}
\item Case 1 $(d =2)$: $I$ is minimally generated by $g+b_2$ quadrics. The table says that $\beta_{23}(R/I) = {b_2\choose 2} + 2b_2$. The only possible pairs of generators that can give a Taylor syzygy that contributes to $\beta_{23}$ will be pairs $(x^2,h)$ with $h\in J$ and $x|h$ or pairs $(h,h')$ of squarefree monomials in $J$. There are exactly $2b_2 + {b_2\choose 2}$ such syzygies and thus they must all be minimal.  Thus $J$ has ${b_2\choose 2}$ linear syzygies as required. 
\item Case 2 $(d=3)$: $I$ is generated by $g$ squares and $b_3$ cubic squarefree monomials. The table says that $\beta_{24} = {b_3\choose 2}  + {g \choose 2} + 3b_3$.  The only possible Taylor pairs that can contribute to $\beta_{24}$ will be the ${g \choose 2}$ pairs of squares, the $3b_3$ pairs $(x^2, h)$ where $x|h$ and $h\in J$, and the ${b_3\choose 2}$ pairs of elements in $J$ that give linear syzygies. Thus all of these must in fact be minimal and so $J$ must itself have ${b_3\choose 2}$ linear syzygies and we are done.
\item Case 3 ($d \geq 4$): The table says that $\beta_{d,d+1}={b_d\choose 2} + db_d$ linear syzygies. Arguing as in Case 2, we must have that $J$ has ${b_d\choose 2}$ linear syzygies and we are done.\qedhere 
\end{enumerate}
\end{proof}
The following lemma is almost harder to carefully state than it is to prove.  The point is that if one knows inductively that up to a certain degree the generators of an ideal have the structure described in (3) of Theorem \ref{MainThm}, then the following is a sufficient condition to ensure that the in the next degree the pattern continues. 
\begin{Lemma}\label{lemma:addlastdeg}
Suppose that $J$ is an ideal whose generators are of the form: 
\begin{eqnarray*}
	\mbox{degree $2$}:& & a_1(y_{11}, \ldots, y_{1b_2}),\\	
	\mbox{degree $3$}:& & a_1a_2(y_1, \ldots, y_{2b_3}),\\	
	& & \vdots \\
	\mbox{degree $e$}:& & a_1a_2\cdots a_{e-1}(y_{e1},\ldots, y_{eb_e})
\end{eqnarray*}
where all of the variables are distinct. Assume that $b_e>0$ (i.e. there is at least one generator of degree $e$) but otherwise allow the $b_j$ to be possibly $0$.  

Suppose that $Dz_1, \ldots, Dz_n$ are distinct squarefree degree $r$ monomials with $r>e, n\geq 1$, for some variables $z_j$.  We do not assume that the variables in $Dz_j$ are distinct from those above, but we do assume that no $Dz_i$ is in the ideal $J$.  Suppose that each syzygy pair $(h,Dz_j)$ for $h$ a generator of $J$ is linear and minimal. Then up to relabeling, the degree $r$ generators are of the form:
\begin{eqnarray*}
	\mbox{degree $r$}:& & a_1a_2\cdots a_{r-1}(y_{r1},\ldots, y_{rb_n})
\end{eqnarray*}
where $a_{e},\ldots, a_{r-1}, y_{rj}$ are new distinct variables.   In particular the ideal $I = Q+J+D(z_1,\ldots, z_n)$ is licci, by Theorem \ref{MainThm}.
\end{Lemma}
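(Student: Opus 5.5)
The plan is to first pin down $D$ using a single generator of top degree $e$, and then spread the conclusion to all of $J$. Fix $h = a_1\cdots a_{e-1}y$, one of the degree $e$ generators, which exists since $b_e>0$. Because $r>e$, the pair $(h,Dz_j)$ is linear precisely when $\deg\lcm(h,Dz_j)=r+1$. Equivalently, all but one variable of $h$ divides $Dz_j$. Since $Dz_j\notin J$, $h$ does not divide $Dz_j$, so exactly one variable of $h$ is missing from $Dz_j$.

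The key step is to show that the missing variable must be $y$, i.e.\ that $a_1\cdots a_{e-1}$ divides every $Dz_j$. Suppose instead that some $a_i$ with $i\le e-1$ is missing from $Dz_j$, so that $y\mid Dz_j$. Then $\lcm(h,Dz_j)=a_iDz_j$. Next look for a third generator $h'$ dividing $a_iDz_j$, which would break minimality. I would take $h'=x^2$ for a suitable variable $x$: this fails, since $x^2\nmid a_iDz_j$ unless $x=a_i$ and $a_i\mid Dz_j$, and the latter is false. So instead I would play off two generators.

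If $b_e\ge 2$, use a second degree $e$ generator $h_2=a_1\cdots a_{e-1}y'$. Its pair with $Dz_j$ must also be linear, and since $a_i\nmid Dz_j$, this forces $y'\mid Dz_j$. Then $h_2\mid a_iDz_j$, contradicting minimality of $(h,Dz_j)$.

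If $b_e=1$, compare with lower-degree generators, or with squares. Consider the pair $(a_i^2,Dz_j)$. It is not a syzygy pair with an element of $J$, so the hypothesis does not apply to it directly. I expect this sub-case to be the main obstacle, and I would try to extract the contradiction from the structure of the lower-degree generators. For instance, if $e\ge 3$ and some lower generator $a_1\cdots a_{k-1}y_k$ exists with $k\le i$, linearity with $Dz_j$ forces an analogous divisibility, and combining it gives a generator dividing $a_iDz_j$. If no such generator exists, I would perhaps fall back on relabeling: when $J$ has only the single generator in degree $\le e$ of that shape, the roles of $a_i$ and $y$ are symmetric up to relabeling. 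So I may choose $y$ to be the missing variable, after checking that the relabeling is consistent across all $j$; the pairs $(h,Dz_j)$ and $(h,Dz_{j'})$ together with minimality should force the same missing variable.

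With $a_1\cdots a_{e-1}\mid Dz_j$ and $y\nmid Dz_j$ for every degree $e$ generator, write $Dz_j=a_1\cdots a_{e-1}\,m_j$. Here $m_j$ has degree $r-e+1$ and avoids all the $y_{e\ell}$. The lower-degree $y_{k\ell}$ are also excluded: a lower generator $a_1\cdots a_{k-1}y_{k\ell}$ dividing $Dz_j$ would put $Dz_j$ in $J$. So the $m_j$ involve only new variables. Moreover $\gcd(m_j)$ has degree $r-e$, because the $Dz_j$ share the degree $r-1$ factor $D$ and are distinct. Name $D/(a_1\cdots a_{e-1})=a_e\cdots a_{r-1}$ and $z_j=y_{rj}$. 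These are new variables distinct from everything above, by the exclusions just made, and the $z_j$ are distinct because the monomials are. This yields the stated form, and licciness then follows from Theorem \ref{Thm5}.
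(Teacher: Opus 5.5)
Your route is the paper's: fix $h=a_1\cdots a_{e-1}y$ of degree $e$, use linearity to see that exactly one variable of $h$ is missing from each $Dz_j$, rule out a missing $a_i$ by finding a third generator dividing $a_iDz_j=\lcm(h,Dz_j)$, and otherwise relabel. Your case $b_e\ge 2$ and your final paragraph are fine. The gap is the case $b_e=1$, which you flag as the main obstacle but do not close.

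The step you write there is wrong as stated. A generator $a_1\cdots a_{k-1}y_k$ with $k\le i$ does not involve $a_i$. Linearity with $Dz_j$ then only forces $y_k\nmid Dz_j$, which gives no contradiction; the inequality must be reversed. The right dichotomy, which is the paper's Cases 1 and 2, is whether $a_i$ divides some generator $h'\neq h$ of $J$. If it does, $a_i$ is the unique variable of $h'$ missing from $Dz_j$, so $h'\mid a_iDz_j$, contradicting minimality of $(h,Dz_j)$. If it does not, then $b_e=1$ and $b_{i+1}=\cdots=b_{e-1}=0$. Only in that situation is the swap $a_i\leftrightarrow y$ compatible with the shape of $J$. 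With your inequality you would apply the swap even when $a_i$ still occurs in intermediate-degree generators.

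You also assert, but do not prove, that all $Dz_j$ miss the same variable of $h$ (``should force''). This is indispensable: the swap is one global relabeling, and your last paragraph needs $a_1\cdots a_{e-1}$ to divide every $Dz_j$. The missing ingredient is to use a second degree $r$ generator as the third monomial, which your proposal never does. Suppose $Dz_j$ misses $u$ and $Dz_{j'}$ misses $v\neq u$. Then $u\mid Dz_{j'}$ but $u\nmid D$, so $u=z_{j'}$. Hence $Dz_{j'}\mid uDz_j=\lcm(h,Dz_j)$, contradicting minimality of $(h,Dz_j)$. This is exactly the paper's $n\ge 2$ step: if $a_\ell\nmid D$ then $z_1=a_\ell$ and $Dz_1\mid\lcm(h,Dz_2)$.

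With these two repairs the argument is complete. One small slip remains: for $n=1$ your gcd claim fails, since $m_1$ has degree $r-e+1$; there you simply take any variable of $m_1$ as $y_{r1}$.
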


\begin{proof}
Consider the degree $e$ generator $h = a_1a_2\cdots a_{e-1}y_{e1}$.  Suppose that $Dz_1$ does not contain some factor $a_j$ in $h$. Then since the syzygy $(Dz_1,h)$ is linear, we must have that $\lcm(Dz_1,h) = Dz_1a_j$ and can deduce that $Dz_1$ must in fact have every other variable of $h$ as a factor. 

\begin{itemize}
\item Case 1: ($a_j$ is a factor of some other minimal generator $h'$ of $J$).  Again by linearity we must have that $Dz_1$ is divisible by a factor of $h'$ except for $a_j$.  But then $\lcm(Dz_1,h,h') = \lcm(Dz_1,h')$, contradicting minimality. Thus $Dz_1$ must contain each $a_j$ as a factor in this case. 
\item Case 2: ($a_j$ only appears in the single generator $h$).  This means that $b_e =1$ and that there are not generators of $J$ of degrees $j+1, j+2, \ldots, e-1$.  But then we can relabel the single generator $h$ as 
$$h = (a_1a_2\cdots \widehat{a_j}\cdots a_{e-1}y_{e1})(a_j)$$
thinking of the first $e-1$ variables as the new $a_i$'s and $a_j$ as $y_{e1}$, and now $Dz_1$ does contain each $a_j$ as a factor.
\end{itemize}
Thus we can assume that $Dz_1$ contains each of $a_1,\ldots, a_{e-1}$ as a factor, that is $Dz_1 = a_1\cdots a_{e-1}M$ for some monomial $M$.  Note that $M$ cannot be divisible by any variable present in the minimal generators of $J$ (since then $Dz_1$ would not be a minimal generator).  

If $Dz_1$ is the only generator of degree $r$ we are done, and can simply declare the remaining factors of $M$ (save 1, which will be $y_{r1}$) to be $a_e, \ldots, a_{r-1}$.  

If $n\geq 2$ we will show that $D$ is divisible by $a_1\cdots a_{e-1}$.  Suppose that some $a_\ell$ with $\ell \leq e-1$ is not a factor of $D$.  Then since $Dz_1$ is divisible by $a_\ell$ we must have that $z_1 = a_\ell$.  

Now consider the generator $Dz_2$. This cannot have a factor of $a_\ell$, since $a_\ell$ is not a factor of $D$ and $z_2 \neq z_1$.  Then since the syzygy $(Dz_2,h)$ is linear, we must have $$\lcm(Dz_2,h) = Dz_2a_\ell = Dz_2z_1 = \lcm (Dz_2,h, Dz_1),$$ which contradicts minimality. 

Thus $a_1\cdots a_{e-1}$ divides $D$. We can declare the remaining factors of $D$ to be $a_e,\ldots, a_{r-1}$ and then set $z_1, \ldots, z_{b_n}$ to be $y_{r1},\ldots, y_{rb_n}$.\qedhere
\end{proof}
The following proposition completes the proof that $(2) \implies (3)$ in Theorem \ref{MainThm}.
\begin{Proposition}\label{Prop11}
Suppose that $I$ is a monomial ideal that contains the squares of the variables. Suppose that the Betti numbers of $I$ are those in the table in part (2) of Theorem \ref{MainThm}. Then $I$ is licci.  
\end{Proposition}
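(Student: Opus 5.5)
We argue by induction on the degrees of the squarefree generators, using Lemma \ref{lemma:addlastdeg} to pass from one degree to the next. Let $d_1<d_2<\cdots<d_s$ be the degrees $j$ with $b_j>0$, and let $J_{\le e}$ denote the ideal generated by the squarefree generators of $I$ of degree at most $e$. The claim to prove inductively is that for each $t$, the generators of $J_{\le d_t}$ have the shape in (3) of Theorem \ref{MainThm}, i.e. the shape assumed in Lemma \ref{lemma:addlastdeg}. For $t=1$, we reproduce the proof of Proposition \ref{Prop:QJsingledegree} applied to the lowest degree $d_1$. The only Taylor pairs that can contribute to $\beta_{2,d_1+1}$ are the following: pairs of degree-$d_1$ squarefree generators sharing a factor of degree $d_1-1$; pairs $(x^2,h)$ with $x\mid h$; pairs of squares when $d_1=3$; and quadratic syzygies $(x^2,h)$ with $h$ of degree $d_1-1$ (there are none, since $b_{d_1-1}=0$). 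Because $b_j=0$ for $j<d_1$, the extra terms in the formula for $\beta_{2,d_1+1}$ vanish. The count of candidate pairs is therefore at most the formula, with equality only if all ${b_{d_1}\choose 2}$ pairs among degree-$d_1$ generators are linear and minimal. Proposition \ref{Prop13} then gives $J_{\le d_1}=D(y_1,\dots,y_\mu)$, which has the required form after labeling the factors of $D$ as $a_1,\dots,a_{d_1-1}$.

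For the inductive step, suppose $J_{\le e}$ has the form (3), where $e=d_t$, and set $r=d_{t+1}$. We look at $\beta_{2,r+1}$ and count the Taylor pairs that could possibly have degree $r+1$. These are pairs $(h,h')$ with $h'$ of degree $r$ and $\deg h\le r$ whose lcm has degree $r+1$ (linear ones); pairs $(x^2,h')$ with $x\mid h'$, $\deg h'=r$; pairs of squares (only if $r=3$); and quadratic pairs $(x^2,h)$ with $\deg h=r-1$, which occur only if $r-1=e$. Among the last kind, the minimal ones can be computed exactly, since $J_{\le e}$ already has form (3); this is the count from Proposition \ref{Prop12} 2(b), namely $b_{r-1}(g-b_2-\cdots-b_{r-1}-(r-2))$. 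One point needs care: a degree-$r$ generator $h'$ could a priori make further quadratic syzygies $(y^2,h)$ non-minimal, or make pairs of squares non-minimal. This would only lower the count, so an upper bound still holds. With these pieces, the formula for $\beta_{2,r+1}$ equals an upper bound on the number of minimal Taylor syzygies of degree $r+1$. We also need $\beta_{2,r+1}$ to be at most the number of minimal Taylor syzygies of that degree. This holds because a non-minimal Taylor syzygy of degree $r+1$ becomes a unit-coefficient cancellation in the Taylor complex, and the minimal resolution is a summand of it. Equality of the two sides then forces every pair $(h,h')$ with $h\in J_{\le e}$ and $h'$ of degree $r$ to be linear and minimal, and every pair of degree-$r$ generators to be linear and minimal.

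Finally we apply the structure result. By Proposition \ref{Prop13}, the degree-$r$ generators have the form $D(z_1,\dots,z_n)$, and none of them lies in $J_{\le e}$ because they are minimal generators. Lemma \ref{lemma:addlastdeg} then shows that $J_{\le r}$ has form (3), which completes the induction. After the last step, $J$ has form (3), and Theorem \ref{Thm5} shows that $I$ is licci.

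The main obstacle is the counting step. We must justify that the candidate pairs listed really exhaust the pairs with lcm of degree $r+1$, and that the formula in (2) is an upper bound exactly matching them; only then does equality force minimality and linearity. The delicate parts are the quadratic syzygies $(x^2,h)$ with $\deg h=r-1$, where non-minimality may also be caused by degree-$r$ generators whose structure is not yet known, and the degree-$4$ bookkeeping in which the ${g\choose 2}-b_2$ pairs of squares enter. For these I would first try to show that any such additional non-minimality only decreases the count, keeping the inequality in the right direction; the equality with the formula then rules it out.
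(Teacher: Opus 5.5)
\textbf{There is a genuine gap: the key inequality in your inductive step runs the wrong way.} Your overall architecture matches the paper's. You build the degree strata bottom-up, use Proposition \ref{Prop13} on the top stratum, and attach it with Lemma \ref{lemma:addlastdeg}; the paper does the same while inducting on $|G|$ by stripping off the top degree.

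The problem is your claim that $\beta_{2,r+1}(R/I)$ is at most the number of \emph{minimal} Taylor syzygies of degree $r+1$. In general the reverse holds. Minimal Taylor syzygies are linearly independent modulo $\mathfrak m\ker\phi$, which is the argument of Lemma \ref{lemma10}. So their number is at most $\beta_{2,r+1}$, and it can be strictly smaller. The paper's own Remark shows this: for $(xy,yz,xz)$ none of the three Taylor syzygies is minimal, yet $\beta_2=2$.

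Your cancellation justification fails for a specific reason. One $e_{ijk}$ with unit coefficients can witness the non-minimality of up to three pairs, but it can be cancelled against only one of them. The number of cancellations is the rank of the scalar part of $\gamma$, not the number of non-minimal pairs. At this stage you know nothing about the degree-$r$ generators, so triangles such as $Dxy,Dyz,Dxz$ are not excluded. Your chain ``formula $\ge \#\{\text{minimal}\}\ge \beta_{2,r+1}$'' therefore breaks at the second link, and $\beta_{2,r+1}=$ formula forces nothing.

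\textbf{The fix} is what the paper does when it counts pairs ``minimal or not'' and ``without even considering the degrees.''
\begin{itemize}
\item The degree-$(r+1)$ Taylor syzygies span $(\ker\phi/\mathfrak m\ker\phi)_{r+1}$. Hence $\beta_{2,r+1}$ is at most the number of \emph{all} Taylor pairs of degree $r+1$, minus only those already known to lie in $\mathfrak m\ker\phi$.
\item The pairs you may discard are those whose redundancy is witnessed by two strictly lower-degree syzygies. These are the $(y_{pq}^2,h)$ with witness $a_1\cdots a_{p-1}y_{pq}\in J_{\le e}$, and, when $r=3$, the $(x^2,y^2)$ with $xy\in G$.
\item Do \emph{not} subtract pairs that degree-$r$ generators might make non-minimal. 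Bound the $(rr)$ and $(Kr)$ pairs by their total number, regardless of degree. The resulting count is at most the formula.
\item Equality then forces every $(rr)$ and $(Kr)$ pair to be linear. It also forces the counted syzygies to form a basis of $(\ker\phi/\mathfrak m\ker\phi)_{r+1}$.
\item Suppose a counted $\psi(e_{ij})$ were non-minimal via some $f_k$. Then $\psi\gamma(e_{ijk})=0$ expresses it, modulo $\mathfrak m\ker\phi$, through other degree-$(r+1)$ Taylor syzygies, each counted or already in $\mathfrak m\ker\phi$. That contradicts independence, so all counted syzygies are minimal.
\end{itemize}
After this, Proposition \ref{Prop13} and Lemma \ref{lemma:addlastdeg} apply as you intended. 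Your base case already counts all candidate pairs rather than only minimal ones, so there you only need to add this last independence step.
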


\begin{proof}
Let $G$ denote the set of squarefree generators. Suppose that $I = Q + (G)$ and that the Betti numbers of $I$ match those in the table.  We will prove that $I$ is licci.  We will induct on $|G|$.  If $|G| = 1$ then $I$ is licci perforce and we are done. Suppose that the claim is true for all ideals with at most $b$ generators, now suppose that $|G| = b+1$. If the elements of $G$ all have the same degree, $I$ is licci by Proposition \ref{Prop:QJsingledegree}.  

Suppose that $G$ has elements in at least two distinct degrees.  Let $G_1$ denote the set of all squarefree generators of submaximal degree, that is, $G = G_1 \cup G_r$ where $G_r$ consists of all the generators of maximal degree, $r$.  Note that $r\geq 3$.  We will consider the ideal $K = Q + (G_1)$.  Note that until the row containing $\beta_{1r}$, the Betti tables of $K$ and $J$ are identical, since those rows cannot depend on generators of degree $r$.  Thus, the Betti numbers of $K$ match those of the table and so by induction $K$ is licci and thus by Theorem \ref{Thm5} these squarefree generators of degree $j$ have the form $a_1\cdots {a_{j-1}}y_{j\ell}$ for distinct variables $a_i,y_{pq}$. 

Suppose that $e$ is the largest degree of a generator of $K$. Then to complete the proof we need to show that 
\begin{equation*}\begin{split}
\mbox{There exist (new) variables $a_e, \ldots, a_{r-1}, y_{r1}, \ldots, y_{rb_r}$, so that }\\ \mbox{every degree $r$ generator has the form $a_1\cdots a_{r-1}y_{j\ell}$}.
\end{split}
\tag{$\star$}	
\end{equation*}

When we say ``new'' we mean that these variables do not appear in any squarefree generator of $K$.  We will show that we can apply Lemma \ref{lemma:addlastdeg}. We are given that $\beta_{2,r+1}$ is the sum of the following:

\begin{enumerate}
\item ${b_r \choose 2}$
\item  $b_r(b_2+\cdots+b_{r-1})$
\item  $rb_r$
\item $(g-(b_2+ \cdots +b_{r-1}+(r-2)))b_{r-1}$
\item (if $r = 3$) ${g \choose 2} - b_2$
\end{enumerate}

To complete the proof, we must show that $I$ has the form given in part (3) of Theorem \ref{MainThm}.
Now the only difference between $K$ and $I$ are the $b_r$ generators of degree $r$.  Let us carefully consider all possible syzygy pairs $(f,h)$ that could contribute to $\beta_{2,r+1}$.  Note that $f$ and $h$ must be chosen from: 
\begin{enumerate}
	\item[$Q$.] A square $x^2$ from $Q$
	\item[$K$.] A squarefree generator $a_1\cdots {a_{j-1}}y_{j\ell}$ from $K$
	\item[$r$.] A ``new'' squarefree generator of degree $r$.
\end{enumerate}

Thus there are six cases to consider.
\begin{enumerate}
\item[($QQ$)] These will only contribute to $\beta_{2,r+1}$ if $r=3$, in which case the number of minimal such pairs will be ${g\choose 2} -b_2$, which is item $(5)$ in the list above. 
\item[($QK$)] For the degrees to match, this must be a quadratic syzygy of a square $x^2$ in $Q$ with a squarefree monomial $h=a_1\cdots {a_{r-1}}y_{j\ell}$ in $K$, where $x$ is not a factor of $h$.  There are $b_{r-1}(g-(r-1))$ such pairs and by the discussion in 2b of Proposition \ref{Prop12} at least $((b_{r-1}-1)+b_{r-2}+\cdots+b_2)b_{r-1}$ of them are non-minimal (their nonminimality come from the generators of $K$).  It is of course possible that the new generators of degree $r$ could deem others of these to be non-minimal.  But in any event we have at most: $(g-(b_2+ \cdots +b_{r-1}+(r-2)))b_{r-1}$ minimal syzygy pairs of this type contributing to $\beta_{2,r+1}$, which is item $(4)$ in the list above. 
\item[($KK$)] For degree reasons, none of these pairs can contribute to $\beta_{2,r+1}$. 
\item[($Qr$)] For a pair $(x^2,h)$ to give a syzygy of degree $r+1$ it must be the case that $x$ is a factor of $h$.  Minimal or not, there are $b_r$ choices for $h$ and $r$ choices for $x$.  This is item $(3)$ above. 
\item[($Kr$)] Without even considering the degrees of the syzygies, there are at most $b_r(b_2 + \cdots b_{r-1})$ pairs $(g,h)$ of this form. This is item $(2)$ above.
\item[$(rr)$] Again, without even considering the degrees, there are at most ${b_r \choose 2}$ such pairs. This is item $(1)$ above. 
\end{enumerate}
The 6 upper bounds just established are exactly equal to the 6 terms of the sum equal to $\beta_{2,r+1}(R/I)$.  Thus equality must hold for each type.  In particular, all of the ${b_r\choose 2}$ syzygies of type $(rr)$ must in fact be linear syzygies. Thus by Proposition \ref{Prop13} we have that the generators of $J$ are of the form $Dz_1, \ldots, Dz_{b_r}$ for some monomial $D$ and distinct variables $z_i$.  

Now consider the $b_r(b_2+\cdots + b_{r-1})$ syzygies of type $(Kr)$.  These must all be linear and minimal as well, are now in the situation to apply Lemma \ref{lemma:addlastdeg} which completes the proof.
\end{proof}

\section*{Acknowledgements}
Support for this research was provided
by an AMS-Simons Research Enhancement Grant for Primarily Undergraduate Institution Faculty.  The author thanks 
David Eisenbud, Bennet Goeckner, Elo\'isa Grifo, Craig Huneke, Srikanth Iyengar, Claudia Polini, and Bernd Ulrich for helpful conversations and correspondence. 

\end{document}